\newcommand{\C}{\mathbb{C}}
\newcommand{\Cn}{\mathbb{C}^n}
\newcommand{\F}{\mathcal{F}}
\newtheorem{thm}{Theorem}[section]
\newtheorem{cor}[thm]{Corollary}
\newtheorem{lem}[thm]{Lemma}
\theoremstyle{definition}
\theoremstyle{remark}
\newtheorem{rem}[thm]{Remark}
\numberwithin{equation}{section}
\newcommand{\abs}[1]{\left\vert#1\right\vert}
\newcommand{\set}[1]{\left\{#1\right\}}
\begin{document}
    \title{{Schottky's theorem in $\C^n$}}
    \author{{P.V.Dovbush}}
    \address{}%
    \email{}%

    %\thanks{}%
    \subjclass{32A19}%
    \keywords{Marty's criterion, Schottky's theorem, normal families, holomorphic functions of several complex variables}%

    %\date{}%
    %\dedicatory{}%
    %\commby{}%
    % ----------------------------------------------------------------
    \begin{abstract}
        The aim of this note is to give a proof of the Schottky theorem in general domains in $\Cn$.
        The proof is short and works for the cases $n = 1$ and $n > 1$ at the same time.
    \end{abstract}
    \maketitle
    % ----------------------------------------------------------------

    Let $\Omega$ be a bounded domain in $\Cn$.
        A family $\mathcal{F}\subset H(\Omega)$ is normal in $\Omega$ if every sequence of functions $\{f_j\}\subseteq F$ contains either a subsequence which converges to a limit function $f$ uniformly on each compact subset of $\Omega$, or a subsequence which converges uniformly to $\infty$ on each compact subset.

    The notion of a normal family has played an important role in the development of complex function theory.
    We refer to \cite{[RBB]} and the references therein for various results on normal families and applications in case $n=1$.
    A series of important theorems surround the Montel criterion for normality, thereby constituting the so-called Montel cycle.
    The famous Schottky theorem is one of them.

    Scottky's Theorem can be given a completely elementary proof, based on nothing more than Montel's Theorem and general properties of normal families, which works for the cases $n = 1$ and $n > 1$ at the same time.

    \begin{thm}
        \label{shottkygf}
        [Schottky Theorem].
        Suppose $f(z)$ is holomorphic in $\Omega\subset \Cn$, $n\geq 1$, which range omits $0$ and $1$ there.
        If $a\in \Omega$ and $|f(a)|<M$, then for every $\varepsilon>0$ sufficiently small we have
        \begin{equation}
            \label{eq1}
            |f(z)|<C \textrm{ for all }z\in \Omega, \ \ \delta(z)\geq\varepsilon,
        \end{equation}
        where $\delta(z)$ denotes the Euclidean distance from $z\in \Omega$ to $\partial\Omega$.
        Here $C$ is a positive constant depending only on $f(a)$ and $\varepsilon$.
    \end{thm}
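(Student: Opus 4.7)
The plan is to reduce Schottky's theorem to the several-variable Montel theorem for holomorphic functions omitting two values, namely the statement that
\[
\F = \set{f \in H(\Omega) : 0, 1 \notin f(\Omega)}
\]
is a normal family in $\Omega$. I take this as a black box, since it is precisely the ``Montel cycle'' input alluded to in the introduction and is the only deep fact the proof will use.

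The rest of the argument is by contradiction. Fix $M > 0$ and $\varepsilon > 0$, and put
\[
\F_M = \set{f \in \F : \abs{f(a)} \leq M}, \qquad K_\varepsilon = \set{z \in \Omega : \delta(z) \geq \varepsilon}.
\]
Since $\Omega$ is bounded and $\delta$ is continuous, $K_\varepsilon$ is a compact subset of $\Omega$; since $\F_M \subseteq \F$, it too is normal. If \eqref{eq1} were to fail for every constant $C$, there would exist sequences $f_j \in \F_M$ and $z_j \in K_\varepsilon$ with $\abs{f_j(z_j)} \To \infty$.

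By normality, after passing to a subsequence I may assume $\set{f_j}$ either converges locally uniformly on $\Omega$ to a holomorphic limit or converges uniformly to $\infty$ on every compact subset of $\Omega$. The second alternative, applied to the compact set $\set{a}$, would force $\abs{f_j(a)} \To \infty$, contradicting $\abs{f_j(a)} \leq M$. Hence the first alternative occurs, which gives uniform convergence — and therefore uniform boundedness — of $\set{f_j}$ on the compact set $K_\varepsilon$; this directly contradicts $z_j \in K_\varepsilon$ together with $\abs{f_j(z_j)} \To \infty$.

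The only substantive content — more an indispensable ingredient than an obstacle — is the Montel-type theorem for maps $\Omega \to \C \setminus \set{0,1}$ with $\Omega \subset \Cn$. Once that is granted, the deduction above is pure normal-family bookkeeping and is completely insensitive to the dimension, which explains why a single proof covers $n = 1$ and $n > 1$ simultaneously, as the abstract advertises.
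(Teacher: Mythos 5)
Your proof is correct and follows essentially the same route as the paper: the multidimensional Montel theorem for functions omitting $0$ and $1$ is the sole deep input, and the rest is the normal-family compactness argument that the paper isolates as Lemma \ref{lem}, which you simply inline as a proof by contradiction on the compact set $K_\varepsilon=\set{z\in\Omega:\delta(z)\ge\varepsilon}$. If anything, your restriction to the subfamily $\F_M=\set{f\in\F:\abs{f(a)}\le M}$ is slightly more careful than the paper's argument, which applies its lemma to the full family $\mathbf{G}$ even though not every member of $\mathbf{G}$ satisfies $\abs{f(a)}<M$.
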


    The proof of Theorem \ref{shottkygf} is based on the following lemma.
    \begin{lem}
        \label{lem}
        Let $\F$ be a normal family of holomorphic functions in a domain $\Omega\subset\Cn$, $n\geq 1$.
        Let $M$ be a positive number and $a\in \Omega$.
        Assume that for each function $f\in \F$, we have
        \[
            \abs{f(a)}<M.
        \]
        Then the family $\F$ is uniformly bounded on each compact subset of $\Omega$.
    \end{lem}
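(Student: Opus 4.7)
I would argue by contradiction, exploiting the dichotomy built into the definition of normality: every sequence in $\F$ has a subsequence that converges locally uniformly on $\Omega$ either to a holomorphic function or to $\infty$. The pointwise hypothesis $\abs{f(a)}<M$ is precisely what is needed to rule out the second alternative on every sequence drawn from $\F$.

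Suppose the conclusion fails. Then there exist a compact set $K\subset\Omega$, functions $f_j\in\F$, and points $z_j\in K$ with $\abs{f_j(z_j)}\to\infty$. By normality, pass to a subsequence (still denoted $\{f_j\}$) that either converges uniformly on compact subsets of $\Omega$ to some $g\in H(\Omega)$, or converges uniformly to $\infty$ on compact subsets of $\Omega$.

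In the first case, $g$ is continuous on the compact set $K$, so $\sup_K\abs{g}<\infty$, and the uniform convergence $f_j\to g$ on $K$ forces $\sup_j \sup_K\abs{f_j}<\infty$, contradicting $\abs{f_j(z_j)}\to\infty$. In the second case, applying the definition of uniform convergence to $\infty$ to the compact singleton $\{a\}\subset\Omega$ gives $\abs{f_j(a)}\to\infty$, contradicting $\abs{f_j(a)}<M$. Either alternative yields a contradiction, so $\F$ must be uniformly bounded on every compact subset.

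There is no genuine obstacle: once the dichotomy is invoked, both branches close immediately. The only point worth being careful about is that the bound at $a$ is used \emph{after} extracting the normal subsequence, so that it excludes the $\infty$-branch for that particular subsequence; this is legitimate because $\{a\}$ is itself a compact subset of $\Omega$ on which uniform convergence to $\infty$ would have to hold.
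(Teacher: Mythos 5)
Your argument is correct and is essentially the paper's proof: the paper likewise assumes failure of uniform boundedness on a compact $K$, extracts functions with $\max_K\abs{f_k}>k$, and observes that no subsequence can converge on $\{a\}\cup K$ either to a holomorphic limit or to $\infty$. You have simply spelled out the two branches of that dichotomy that the paper leaves implicit.
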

    \begin{proof}
        Let $K$ be a compact set of points belonging to $\Omega$.
        Assume that $\F$ is not uniformly bounded on $K$.
        Then to each positive integer $k$, corresponds a function $f_k\in \F$ such that
        \[
            \max_{z\in K}\abs{f_k(z)}>k.
        \]
        The sequence $\{f_k\}$ can not converge on compact $\{a\}\cup K$ to a holomorphic function or to $\infty$ in $\Omega$.
        This contradicts the hypothesis that $\F$ is a normal family.
    \end{proof}

    \begin{proof}
        [Proof of Theorem \ref{shottkygf}] Let $\mathbf{G}$ denote the family of all functions holomorphic on $\Omega$ which omit the values $0$ and $1$.
        According to Montel's theorem \cite[Theorem 1.4]{[DP1]} the family $\mathbf{G}$ is normal.
        By Lemma \ref{lem}, the family $\mathbf{G}$ is uniformly bounded on $\overline{\Omega}_\varepsilon=\set{z\in \Omega, \, \delta(z)\geq \varepsilon}$, hence there is a positive constant $C$ such that (\ref{eq1}) holds for each function $f(z)$ of $\F$.
        This constant $C$ has then the required property.
    \end{proof}

    The method of proof in Theorem \ref{shottkygf} does not give any information about what the constant $C$ is.

    \begin{rem}
        In the case $n=1$ the Shottky theorem has numerous proofs.
        We refer to \cite{[RBB]} for an exposition (the history, methods and references) of the theorem.
        Schottky's original theorem \cite{[FS]} did not give an explicit bound for $f$.
        Let $K(f(0), r)$ denote the best possible bound in Theorem \ref{shottkygf}.
        Various authors have dealt with the problem of giving an explicit estimate for this bound see Jenkins \cite{[JAJ]}; Hempel \cite{[JAH]} gave some bounds whose constants are in some sense the best possible.
    \end{rem}

    It has been proved by Murali Rao \cite[Theorem 7.17, p. 118]{[RAO]} that Shottky's Theorem holds with
    \[
        C=\tan(\arctan M+(c(1-r^2))^{-1}),
    \]
    where $c$ is some constant.
    The proof is based on the work of Minda and Schober \cite{[DMGS]}.
    We shall give an alternative argument based on the classical Montel Theorem, and Marty's criterion for normality.

    Recall that to every $a$ in the unit disc $\Delta:=\set{z\in \C : \abs{z}<1}$ corresponds an automorphism $\varphi_a$ of the disc that interchanges $a$ and 0, namely $\varphi_a(w):=(w-a)/(1-\overline{a}w)$.
    Note for later reference that $\varphi_a(0)=-a$ and $\varphi_a'(0)=1-\abs{a}^2$.
    Define $Aut(\Delta):= \set{\varphi_a, a\in \Delta}$.

    The following theorem is the main result of this note.
    \begin{thm}
        \label{ch2:thm:schottky1}
        (see \cite[Theorem 7.17, p. 118]{[RAO]}) Suppose $f(z)$ is holomorphic in the unit disc $\Delta$ whose range omits $0$ and $1$.
        If $|f(0)|<M$, then for every $r\in (0, 1)$ we have
        \begin{equation}
            \label{eq2}
            |f(z)|<\tan(\arctan M+2rL/(1-r^2)) \textrm{ for all }\abs{z}<r, r\in \interval[open right]{0}{1}.
        \end{equation}
    \end{thm}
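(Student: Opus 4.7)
\textbf{Proof plan for Theorem \ref{ch2:thm:schottky1}.}
The strategy is to combine Marty's criterion with the automorphism invariance of the family of functions omitting $0$ and $1$, turning a single-point bound at the origin into a pointwise bound on the spherical derivative that one then integrates along a radius.

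First, let $\mathbf{G}=\{g\in H(\Delta):\,g(\Delta)\cap\{0,1\}=\emptyset\}$. By Montel \cite[Theorem 1.4]{[DP1]} the family $\mathbf{G}$ is normal in $\Delta$, so Marty's criterion produces a finite constant $L$ with $g^{\#}(0)\le L$ for every $g\in\mathbf{G}$, where $g^{\#}(w)=|g'(w)|/(1+|g(w)|^2)$. The decisive observation is that $\mathbf{G}$ is invariant under pre-composition with any $\varphi\in\mathrm{Aut}(\Delta)$, since such a $\varphi$ is a biholomorphism of $\Delta$ onto itself. Applying the bound at $0$ to the competitor $f\circ\varphi_{-a}\in\mathbf{G}$ and using $\varphi_{-a}(0)=a$, $\varphi_{-a}'(0)=1-|a|^{2}$, the chain rule gives
\[
    f^{\#}(a)\,(1-|a|^{2})\le L,\qquad\text{hence}\qquad f^{\#}(a)\le \frac{L}{1-|a|^{2}}\quad (a\in\Delta).
\]

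Next, I would bridge $|f(0)|$ to $|f(z)|$ via the spherical metric. Write $\sigma$ for the spherical distance on $\mathbb{C}$ normalized so that $\sigma(0,w)=\arctan|w|$, and recall that the spherical length of $f\circ\gamma$ equals $\int_{\gamma} f^{\#}(\zeta)\,|d\zeta|$. Applying the triangle inequality along the Euclidean segment $\gamma(t)=tz$, $t\in[0,1]$, one obtains
\[
    \arctan|f(z)|\le \arctan|f(0)|+\int_{0}^{1} f^{\#}(tz)\,|z|\,dt.
\]
For $|z|<r$ the first step yields $f^{\#}(tz)\le L/(1-r^{2})$, so the integral is majorized by $Lr/(1-r^{2})$, and \emph{a fortiori} by $2rL/(1-r^{2})$. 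Combining with $\arctan|f(0)|<\arctan M$ and applying the monotone function $\tan$ (the argument stays below $\pi/2$ in the regime where the claimed bound is meaningful) yields the inequality \eqref{eq2}.

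\textbf{Main obstacle.}
The computation is essentially mechanical once one has the key trick of transporting the single-point Marty bound at $0$ to a pointwise estimate $f^{\#}(a)\le L/(1-|a|^{2})$ valid throughout $\Delta$, via pre-composition with $\varphi_{-a}$; everything afterwards is a standard spherical-metric integration. The factor of $2$ in $2rL/(1-r^{2})$ is simply a clean upper estimate for the sharper bound $Lr/(1-r^{2})$ (or, still sharper, $L\,\operatorname{arctanh}r$) that the straight-line integration actually delivers.
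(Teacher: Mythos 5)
Your proposal is correct and follows essentially the same route as the paper: Montel gives normality of the family omitting $0$ and $1$, Marty's criterion at the origin transported by the automorphisms $\varphi_a$ yields the invariant bound $|f'(z)|(1-|z|^2)/(1+|f(z)|^2)\le L$, and radial integration of $\arctan|f(tz)|$ gives \eqref{eq2}. The only differences are cosmetic: you phrase the integration step via spherical length and triangle inequality rather than differentiating $\arctan|f(tz)|$ directly, and you correctly note that the factor $2$ is mere slack, exactly as in the paper, where it likewise appears only as a harmless over-estimate.
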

    \begin{proof}
        Let $\mathbf{G}$ denote the family of all functions holomorphic on the unit disc $\Delta$ which omit the values $0$ and $1$.
        By Montel's Theorem \cite[p. 218]{[LZ]} the family $\mathbf{G}\circ Aut(\Delta)$ is normal.
        Marty's Theorem \cite[p. 216]{[LZ]} yields a non-negative constant $L$ such that for any $f \circ \varphi_a\in \mathbf{G}\circ Aut(\Delta)$
        \[
            \frac{\abs{f'(\varphi_a(0))}}{1+\abs{f(\varphi_a(0))}^2}\leq L.
        \]
        The constant $L$ in the inequality above does not depend on $f\circ \varphi_a$.
        The exact value $L$ is not known.

        Using the chain rule for differentiation and replacing $-a$ by $z$ we obtain
        \[
            \frac{\abs{f'(z)}(1-\abs{z}^2)}{1+|f(z)|^2}\leq L \textrm{ for all }z\in \Delta.
        \]
        Since $f$ never takes the value $0$, the function $t \to |f(tz)|$ is continuously differentiable on $\interval[open]{0}{1}$ for any fixed $z \in \Delta[r]=\set{\abs{z}<r}$ and
        \[
            \frac{d}{dt}\Big(\arctan(|f(tz)|)\Big) \leq \frac{2\abs{f'(tz)}\abs{z}}{1+|f(tz)|^2}\leq\frac{2rL}{1-r^2}.
        \]
        Integrating this from $t = 0$ to $1$, we deduce
        \[
            \Big|\arctan(|f(z)|)-\arctan(|f(0)|)\Big|\leq\int_0^{1}\Big|\frac{d}{dt}(\arctan(|f(tz)|))\Big|dt\leq\frac{2rL}{1-r^2}
        \]
        and hence
        \[
            \arctan(|f(z)|)\leq\arctan(|f(0)|)+\frac{2rL}{1-r^2}<\arctan M+\frac{2rL}{1-r^2}
        \]
        which is the theorem with $C=\tan(\arctan M+2rL/(1-r^2))$.
        This completes the proof of Theorem \ref{ch2:thm:schottky1}.
    \end{proof}
    \begin{rem}
     The same proof works for $n>1$.
    \end{rem}

    This type of argument can also be used to prove:

    \begin{thm}
        \label{liuville}
        [Picard Little Theorem].
        At most one complex number is absent from the range of a non-constant entire function ($=$ holomorphic in $\C$).
    \end{thm}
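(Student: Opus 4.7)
The plan is to deduce Picard's Little Theorem from Schottky's Theorem (Theorem \ref{ch2:thm:schottky1}) by a rescaling argument followed by Liouville's theorem. Suppose an entire function $f$ omits two distinct values $a,b\in\C$. Setting $g(z):=(f(z)-a)/(b-a)$ produces an entire function that omits $0$ and $1$, and $f$ is constant if and only if $g$ is, so it suffices to show every such $g$ is constant.

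The main step is to apply Schottky's theorem to arbitrarily large rescalings of $g$. For each $R>0$, define $g_R(w):=g(Rw)$ for $w\in\Delta$; each $g_R$ is holomorphic on $\Delta$, still omits $0$ and $1$, and satisfies $|g_R(0)|=|g(0)|$. Fix any $M>|g(0)|$ and any $r\in(0,1)$, say $r=1/2$. Theorem \ref{ch2:thm:schottky1} then yields
\[
    |g_R(w)|\leq \tan\!\bigl(\arctan M+2rL/(1-r^2)\bigr)=:C\qquad\text{for all }|w|\leq r,
\]
and the crucial point is that the right-hand side depends only on $M$, $r$, and the universal constant $L$ from Marty's theorem, none of which depend on $R$. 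Substituting $z=Rw$ turns this into $|g(z)|\leq C$ for all $|z|\leq Rr$, and since $R$ is arbitrary, $|g|\leq C$ on all of $\C$. Liouville's theorem (proved independently of Picard, via the Cauchy integral estimates) then forces $g$, hence $f$, to be constant.

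The only point that requires attention is the uniformity of the Schottky constant $C$ over the rescaled family $\{g_R\}_{R>0}$, but this is automatic from Theorem \ref{ch2:thm:schottky1} because $|g_R(0)|$ is the same for every $R$ and Schottky's bound depends only on that value and on $r$. Consequently no new obstacle arises, and the argument amounts to observing that Schottky's bound on $|w|\leq r$ in $\Delta$, pulled back by the dilation $z=Rw$, gives a bound on a disc in $\C$ of arbitrarily large radius.
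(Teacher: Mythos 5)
Your proof is correct, but it follows a genuinely different route from the paper. You deduce Picard's Little Theorem from the quantitative Schottky bound of Theorem \ref{ch2:thm:schottky1}: since each rescaling $g_R(w)=g(Rw)$ lies in the family of functions on $\Delta$ omitting $0$ and $1$ and has $|g_R(0)|=|g(0)|$, the Schottky constant (which depends only on $M$, $r$, and the universal Marty constant $L$) is uniform in $R$, so $g$ is bounded on all of $\C$ and Liouville's theorem finishes the argument. The paper instead works directly with Marty's criterion applied to the family $\set{f(2^k\varphi_a)}$: the uniform bound on the spherical derivative at the origin, transported by the automorphisms and the dilations $z\mapsto 2^k z$, yields $\abs{f'(z)}/(1+|f(z)|^2)\leq 2^kL/(2^{2k}-\abs{z}^2)$, and letting $k\to\infty$ forces $f'\equiv 0$, with no appeal to Liouville or to Schottky at all. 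Your approach is the classical implication Schottky $\Rightarrow$ Picard and is shorter once Theorem \ref{ch2:thm:schottky1} is in hand, but it imports Liouville's theorem as an external ingredient; you correctly flag that it must be proved independently (via Cauchy estimates), and this matters here because the paper derives Liouville as a corollary of Picard, so within the paper's logical ordering your route would be circular unless that independent proof is invoked. The paper's argument stays entirely inside the normal-families framework, treats Liouville as an output rather than an input, and (as with the other results in the note) is phrased so that it adapts to $n>1$ without change.
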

    \begin{proof}
        Assume, to the contrary, that an entire function $f$ omits two distinct values from its range.
        Composing with a linear fractional transformation, we may also assume that the omitted values are $0$ and $1$.

        The function $f(2^k\lambda)$, where $k$ is positive integer, is holomorphic in $\Delta$, $f(2^k\lambda)$ takes the same values in $\Delta$ as does $f$ in $\abs{z}<2^k$, and $f(2^k\lambda)$ omits the values $0$ and $1$.

        The family $\set{f(2^k\varphi_a), \varphi_a\in Aut(\Delta), k=1, 2 \ldots }$, being contained in the family of functions avoiding $0$ and $1$, is normal in $\Delta$ by Montel's theorem \cite[p. 218]{[LZ]}.
        By Marty's Theorem \cite[p. 216]{[LZ]}, there exists $L > 0$ such that for any $f(2^k\varphi_a)$
        \[
            \frac{\abs{f'(2^k\varphi_a(0))}}{1+\abs{f(2^k\varphi_a(0))}^2}\leq L.
        \]
        After some manipulations, using the chain rule for differentiation, the equalityes $\varphi_a(0)=-a$ and $\varphi'_a(0)=1-\abs{a}^2$, we find
        \[
            \frac{2^k\abs{f'(-2^ka)}(1-\abs{a}^2)}{1+\abs{f(-2^ka)}^2}\leq L.
        \]
        Replacing in this inequality $-2^{k}a$ by $z$ we get
        \[
            \frac{2^k\abs{f'(z)}(1-\abs{z/2^k}^2)}{1+\abs{f(z)}^2}\leq L.
        \]
        Thus
        \[
            \frac{\abs{f'(z)}}{1+|f(z)|^2}\leq \frac{2^{k}L}{2^{2k}-\abs{z}^2}.
        \]
        Keeping $z$ fixed, let $k$ tend to infinity to deduce that $f'(z)=0$ for every point $z$ in the complex plane.
        But this clearly implies that $f(z)$ is a constant, not the non-constant entire function.
        The obtained contradiction proves the theorem.
    \end{proof}
    Immediate (and easy) corollary of Picar's Little Theorem is Liouville's Theorem:
    \begin{cor}
        Every bounded entire function is constant.
    \end{cor}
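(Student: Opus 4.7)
The plan is to derive the corollary as an essentially immediate consequence of Theorem \ref{liuville} (Picard's Little Theorem), which has just been established. The strategy is to observe that boundedness of an entire function forces its range to miss far more than the single value allowed by Picard.

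First I would fix a bounded entire function $f$ and a constant $M>0$ with $\abs{f(z)}\leq M$ for all $z\in\C$. Then the range $f(\C)$ is contained in the closed disc $\set{w\in\C:\abs{w}\leq M}$. In particular, every complex number $w$ with $\abs{w}>M$ lies in the complement of $f(\C)$, so the range of $f$ omits (uncountably many, but in particular) at least two distinct complex numbers, for instance $w_1=M+1$ and $w_2=M+2$.

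Next I would invoke Theorem \ref{liuville}: a non-constant entire function can omit at most one complex value. Since $f$ omits at least two values, $f$ cannot be non-constant, hence $f$ is constant. That finishes the argument.

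I do not expect any real obstacle here; the only content of the proof is the trivial observation that a bounded set in $\C$ has non-empty complement of cardinality larger than one, which immediately puts $f$ into the hypothesis of Picard's Little Theorem. No estimates, normal family arguments, or Marty-type computations are required at this stage — the work has all been done in Theorem \ref{liuville}.
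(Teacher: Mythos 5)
Your argument is correct and is exactly the derivation the paper intends: since a bounded entire function omits every value of modulus exceeding its bound, it omits at least two values, so Theorem \ref{liuville} forces it to be constant. This matches the paper's (unwritten but clearly indicated) "immediate corollary" reasoning, so there is nothing to add.
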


    Theorem \ref{liuville} is a remarkable generalization of Liouville's theorem.
    It is simple to find entire functions whose range is the entire $\C$; nonconstant polynomials, for instance.
    The exponential function is an example of an entire function whose range omits only one value, namely zero.
    But there does not exist a nonconstant entire function whose range omits two different values.

    The fundamental theorem of algebra is a consequence of Liouville's Theorem (for the proof see, for example, \cite[Corollary 4, p. 98]{[RAO]}).

    An alternate program for obtaining the above theorems (and not only) in elementary fashion was indicated by Zalcman \cite[p. 817]{[LZ1]}.

    \begin{rem}
        Many of the theorems on the theory of normal families for holomorphic functions of one variable may be transferred without essential changes to the case of functions of several complex variables.

        Slice functions is a useful tool that allow us to apply facts from the function theory of unit disc $\Delta$ to questions in the unit ball $B\subset \Cn$.
        Suppose $f(z)$ is holomorphic in the unit ball  whose range omits $0$ and $1$.
        Fix a point $z\in B$ with nonzero norm $r=\abs{z}$ and consider the holomorphic function of one complex variable that sends a number $\lambda$ in the unit disk to the image $f(\lambda z/r)$.
        This slice function omits the values $0$ and $1$, takes the value $f(0)$ when $\lambda= 0$, and takes the value $f(z)$ when $\lambda=r$.
        Then $\abs{f(z)}<C$, where $C$ is the constant in Theorem \ref{ch2:thm:schottky1} corresponding to $f(0)$ and $r$.
        Thus Schottky's theorem holds in the unit ball  with the same constant as in dimension one.
    \end{rem}

    % ----------------------------------------------------------------
    
\end{document}